\documentclass[a4paper,10pt]{article}
\usepackage[centertags]{amsmath}
\usepackage{amsfonts}
\usepackage{amssymb}
\usepackage{amsthm}
\usepackage{epsfig}
\usepackage{setspace}
\usepackage{ae}
\usepackage{eucal}
\usepackage[usenames]{color}%
\setcounter{MaxMatrixCols}{30}%
\usepackage{graphicx}

\theoremstyle{plain}
\newtheorem{thm}{Theorem}[section]
\newtheorem{lem}[thm]{Lemma}

\theoremstyle{definition}

\theoremstyle{remark}

\newtheorem{rem}[thm]{Remark}

\renewcommand{\div}{\operatorname{div}}

\title{The dilemma of turbulence modelling}
\author{J\"org Kampen }

\begin{document}

\maketitle

\begin{abstract}
A new construction technique of multiple solutions  of the Euler equation in strong spaces is introduced which reveals the relationship to multiple Navier Stokes equation solutions with special force terms while avoiding viscosity limit constructions. 
This shows that severe restrictions have to be imposed on time dependent external force terms in order to obtain uniqueness for the Navier Stokes equation Cauchy problem. Such restrictions are imposed in the statement of the so-called millenium problem. Minimal turbulence models should arguably incorporate weaker force terms in order to account for boundary conditions and forces. However, we show that  models of this type which have been proposed recently, do not have a unique solution. This lack of determinism of mimimal turbulence models indicates a dilemma: either models are too simple to capture turbulence but may have unique smooth solutions, or there is a modelling gap as the model does not determine a unique solution. 
\end{abstract}

\section{Introduction}
There have been many attempts and contibutions recently in order to make progress concerning the Cauchy problem 
\begin{equation}\label{Navleray}
\left\lbrace \begin{array}{ll}
\frac{\partial v_i}{\partial t}-\nu\sum_{j=1}^n \frac{\partial^2 v_i}{\partial x_j^2} 
+\sum_{j=1}^n v_j\frac{\partial v_i}{\partial x_j}=f_i\\
\\ \hspace{1cm}+\int_{{\mathbb R}^n}\left( \frac{\partial}{\partial x_i}K_n(x-y)\right) \sum_{j,m=1}^n\left( \frac{\partial v_m}{\partial x_j}\frac{\partial v_j}{\partial x_m}\right) (t,y)dy,\\
\\
\mathbf{v}(0,.)=\mathbf{h},
\end{array}\right.
\end{equation}
to be solved for the velocity $\mathbf{v}=\left(v_1,\cdots ,v_n \right)^T$ on the domain $\left[0,\infty \right)\times {\mathbb R}^n$, where the symbol ${\mathbb R}$ denotes the field of real numbers, $\nu >0$ is a viscosity constant, $K_n$ 
is the Laplacian kernel of dimension $n$, and $\mathbf{h}=(h_1,\cdots,h_n)$ are some data, which satisfy  the incompressibility condition $\sum_{i=1}^n h_{i,i}=0$. Here and in the following we use the classical Einstein notation in order to denote ordinary partial derivatives if this is convenient. The force terms $f_i,~ 1\leq i\leq n$ usually depend on time and space.  A more specific structure of the equation in (\ref{Navleray}) is revealed by the equivalent vorticity equation. In the case of dimension $n= 2$  the vorticity $\omega=(\omega_1,\omega_2,\omega_3)^T$ of the data $(v_1,v_2,0)^T$ is $(0,0,v_{2,1}-v_{1,2})$ such that the vorticity stretching term $v\cdot \nabla \omega$ vanishes and we are left with a scalar Burgers type equation of the form
\begin{equation}\label{dim2}
\frac{\partial}{\partial t}\omega_3+\sum_{j=1}^2v_j\omega_3=0.
\end{equation}

This enormous reduction of complexity, where loss of vorticity stretching may be linked to the loss of turbulent solutions in the informal sense described in \cite{LL}. Note that for $n=3$ incompressibility $\div \mathbf{v}=0$ and $\omega=\mbox{curl}(\mathbf{v})$ imply $\div \omega =0$. Indeed the example of a weakly singular Euler equation given below has no analogue in the case of two dimensions. In the following  we stick to the cases $n=3$ or $n\geq 3$.   

A recent abstract functional analytic approach is discussed in \cite{web} and reported in \cite{natletter}. Although it was shown that abstract functional analytical approaches which are based mainly on the energy identity are unikely to solve the problem of global uniqueness and regularity, the appoach which is published in
 \cite{O} has some merits. Actually, it is posed on a $n$-torus with zero Diriclet-Neumann boundary conditions and zero initial data $h_i=0,~ 1\leq i\leq n$, where $f_i\in L^2$ for $1\leq i\leq n$ is imposed for the external forces, a rather weak requirement. In its abstract formulation this is no restriction since the equation for $w_i:=v_i-h_i,~ 1\leq i\leq n$ can be subsumed under the abstract parabolic operator considered, at least if $h_i\in H^2$, where $H^2$ denotes the Sobolev space of order $2$ in $L^2$ theory. Boundary conditions for the resulting problem for $w_i,~ 1\leq i\leq n$ may be imposed  for $t>0$ keeping the content of the problem, and the resulting sufficiently regular boundary functions may again subsumed under the force terms. Hence, the problem considered in \cite{O} describes essentially a considerable  class of problems in the abstract form
\begin{equation}\label{af}
u_t+Au+B(u,u)=f,~u(0)=0,
\end{equation} 
or in the reduced form
\begin{equation}\label{rf}
\stackrel{\circ}{v}+L\left(\stackrel{\circ}{v},\stackrel{\circ}{v} \right)=\stackrel{\circ}{f},
\end{equation}
where $L\left(\stackrel{\circ}{v},\stackrel{\circ}{v} \right)=B\left(T\stackrel{\circ}{v},T\stackrel{\circ}{v} \right)$, $T\stackrel{\circ}{v}=\int_0^t\exp(-A(t-s))\stackrel{\circ}{v}(s)ds$, and
$\stackrel{\circ}{v}=u'+Au$ (cf. \cite{O} for notation of the abstract problem). A counterexample concerning the proposed stability of (\ref{rf}) is reported in \cite{web}, but this is in weak function spaces such that the author may hope to reestablish it in stronger function spaces. However, even so its formulation is definitely  too abstract (in the sense of forgetting special structure of the original Navier Stokes equations) in order to achieve a gobal existence and uniqueness result. One serious issue is whether the supercritical barrier fomulated in \cite{T} for stochastic averaged models can be avoided in the general setting of (\ref{af}). If there were a unique and smooth solution to the problem stated in \cite{F}, then  it would still be a major achievement if the result in \cite{T} could be transferred to the situation in (\ref{af}) with $f=0$ and (even regular) data $u(0)\in H^m\cap C^m$ for some $m\geq 2$, as this would show that some essential information of the more specific Navier Stokes equation is lost in the more abstract formulation. Here, as usual $C^m$ denotes the space of $m$-times continuously differentiable functions. Another definite issue of the problem formulation in \cite{O} are the time dependent force terms $f_i\in L^2$, which make it impossible to have uniqueness in general as we show in the next section. Paradoxically this is a merit of the formulation if compared to the requirement in \cite{F} that the forces are located in Schwartz space (even with respect to time). For we know that turbulence is caused by boundary conditions or by force terms, and the formulation in \cite{O} seems to be just the right formulation (with a consistent form of subsuming boundary conditions implicitly) if we want to set up an abstract class of models subsuming the phenomena of turbulence.
However, as we shall observe next it is indeed too abstract in order to have a deterministic model.

\section{A short proof of non-uniqueness for the Navier Stokes equation Cauchy problem with time dependent force terms}

Assuming $f_i\in L^2$ and simplifying initial-boundary conditions to zero seems rational, because in dimension $n=2$ it can be shown that two orders of regularity are gained for the source term. Initial boundary conditions $h_i\in H^2,~ 1\leq i\leq n$ are easily subsumed by the force terms of a related problem for $v_i-h_i,~ 1\leq i\leq n$, where non-zero Dirichlet or Neumann boundary conditions for $t>0$ may be set to zero and subsumed under the force terms as well. There may be a jump of the boundary conditions involved at initial time $t=0$, but such kind of problems can be solved by integral formulations - so they are not essential if they exist at all. In this case there may still be a hope that the existence of a global regular solution branch can be proved (if the supercritical barrier of the averaged model can be surpassed), but the solution cannot be unique in general as we show in the following.
We consider the usual Cauchy problem on the whole domain of ${\mathbb R}^n$ here, with data functions $h_i,~ 1\leq i\leq n$ which 
have strong polynomial decay at infinity.  More precisely, for some $m\geq 2$ we consider some class of regular data with $h_i\in  {\cal C}^{m(n+1)}_{pol,m},~1\leq i\leq n$,
where for nonnegative integers $m,n,l$
\begin{equation}\label{pol1}
{\cal C}^{l}_{pol,m}:={\Big \{} f:{\mathbb R}^n\rightarrow {\mathbb R}: \\
\\
\exists c>0~\forall |x|\geq 1~\forall 0\leq |\gamma|\leq m~{\big |}D^{\gamma}_xf(x){\big |}\leq \frac{c}{1+|x|^l} {\Big \}}. 
\end{equation}
In order to have suitable regular spaces with some order of polynomial decay at spatial infinity at hand we define for nonnegative integers $m,l$
\begin{equation}
H^l_{pol,m}:=H^l\cap {\cal C}^{m(n+1)}_{pol,m},
\end{equation}  
where $H^l$ is the standard Sobolev space of order $l\geq 0$ ($L^2$-theory). Indeed, the term 'suitable' has a specific interpretation here, as it allows for transformation of regular problems for $m\geq 2$ to finite domains by coordinate transformation. We have noted elsewhere that this implies that classical limit procedures can be used which are stronger than the limits  obtained by using Rellich embedding for example. In this paper we avoid viscosity limits using different local iteration schemes and thereby we simplify arguments for the existence of regular local solution branches. 
 Now assume that on a local time interval $[0,T]$ for some $T>0$ a solution $v^E_i ,~ 1\leq i\leq n$ of the incompressible Euler equation, i.e., of the equation in (\ref{Navleray}) with $\nu=0$ and incompressibility of the data $\sum_{i=1}^nh^E_{i,i}=0$ is given, where $v^E_i(t,.)\in C^2\cap H^2_{pol,2}$ for $t\in [0,T]$ (especially $v^E_i(0,.)=h_i\in C^2\cap H^2_{pol,2}$). If $h_i\neq 0$ for some $1\leq i\leq n$, then such a solution $v^E_i,1 \leq i\leq n$ is also a nontrivial solution of a Navier Stokes equation on the domain $[0,T]\times {\mathbb R}^n$ with force terms
\begin{equation}
f^E_i:=-\nu\Delta v^E_i\in L^2,~t\in [0,T].
\end{equation}
Hence multiple regular solution branches of the incompressible Euler equation with data in $H^2_{pol,2}\cap C^2$ destroy the possibility of global regular unique solutions of Navier Stokes equation problems with time dependent force terms as in (\ref{af}). 

Now, for the latter task it is sufficient to observe
\begin{itemize}
\item[i)] for any given data $h_i\in H^2_{pol,2}\cap C^2,~ 1\leq i\leq n$ there is a short time solution
\begin{equation}
v^E_i\in C^{1,2}\left([0,T],{\mathbb R}^n\right) 
\end{equation}
of the Euler equation for some time $T>0$;
\item[ii)] for some data $h^{E,-}_i\in \left(  H^2_{pol,2}\cap C^{1,\alpha}\right) \setminus C^2,~ 1\leq i\leq n$ there is a short time solution $v^{E,-}_i\in \left( C^2\cap H^2_{pol,2}\right) \left((0,T], {\mathbb R}^n \right)$ of the time-reversed Euler equation, where $v^{E,-}_i,~ 1\leq i\leq n$ solves
\begin{equation}\label{Navleray1}
\left\lbrace \begin{array}{ll}
\frac{\partial v^{E,-}_i}{\partial \tau}
-\sum_{j=1}^n v^{E,-}_j\frac{\partial v^{E,-}_i}{\partial x_j}=\\
\\ \hspace{1cm}-\int_{{\mathbb R}^n}\left( \frac{\partial}{\partial x_i}K_n(x-y)\right) \sum_{j,m=1}^n\left( \frac{\partial v^{E,-}_m}{\partial x_j}\frac{\partial v^{E,-}_j}{\partial x_m}\right) (t,y)dy,\\
\\
\mathbf{v}^{E,-}(0,.)=\mathbf{h}^{E,-}=\left(h^{E,-}_1,\cdots,h^{E,-}_n\right)^T,
\end{array}\right.
\end{equation}
and where $\tau=T-t$.
Here for positive integer $m$ and H\"older exponent $\alpha\in (0,1)$  $C^{m,\alpha}$ is the space of functions with H\"older continuous derivatives of exponent $\alpha$ up to order $m$.
\end{itemize}
For it follows from i) and ii) that for the final data $h^E_i(.)=v^{E,-}_i(T,.),~ 1\leq i\leq n$ of a solution for the time-reversed equation in ii) we have $h^E_i\in C^2\cap H^2_{pol,2}$ for all $1\leq i\leq n$  and  a solution 
\begin{equation}\label{ve}
v^E_i\in C^2\left([0,T], {\mathbb R}^n \right),~v^E_i(0,.)=h^E_i(.) 
\end{equation}
according to i) such that the solution
\begin{equation}
t\rightarrow v^{E,2}_i(t,.):=v^{E,-}_i(\tau,.),~v^{E,2}_i(T,.)\in \left(H^2_{pol,2}\cap C^{1,\alpha}\right)\setminus C^2
\end{equation}
-which exists according to ii)- is necessarily different form $v^E_i,~ 1\leq i\leq n$ in (\ref{ve}) at $t=T$, while
\begin{equation}
v^{E,2}_i(0,.)=v^{E,-}_i(T,.)=h^E_i=v^E_i(0,.)\in C^2\cap H^2_{pol,2}. 
\end{equation}
\begin{rem}
Data in weaker function spaces can be found for the same conclusion in item ii), but the assumption given there is a) sufficient in order to prove non-uniqueness, and b) simplifies the proof of items i) and ii) below.
\end{rem} 
We consider for some $\nu >0$ a local time  iterative solution schemes $v^{(k)}_i,~1\leq i\leq n,~k\geq 0$ of (\ref{Navleray}) including forces 
\begin{equation}
f^{(k)}_i=-\nu  \Delta v^{(k-1)}\rightarrow f_i=-\nu  \Delta v_i,~ 1\leq i\leq n~ (as~ k\uparrow \infty).
\end{equation}
We choose $\nu=s$ and consider for $s>0$ families of representations in terms of fundamental solutions, 
\begin{equation}\label{Gsnu}
G^{s}(t,x;s,y)=\frac{1 }{\sqrt{4\pi s (t-s)}^n}\exp\left(-\frac{|x-y|^2}{4 s(t-s)} \right)
\end{equation}
 of the  heat equation $\frac{\partial p}{\partial t}-s\Delta p=0$ on the time intervals $[s,t]$
For $t>s\geq  0$ and $x\in {\mathbb R}^n$ we define a family of iteration schemes, where for at the first iteration step $(k)=(0)$ we define
\begin{equation}
v^{(0)}_i(t,x):=\int_{{\mathbb R}^n}h_i(y) G^{s}(t,x;s,y)dy=:h_i\ast_{sp}G^s(t,x),
\end{equation}
and for $k\geq 1$ we define 
\begin{equation}\label{Navleraysol}
\begin{array}{ll}
v^{(k)}_i
=h_i\ast_{sp}G^s-\sum_{j=1}^n \left( v^{(k-1)}_j v^{(k-1)}_{i,j}\right) \ast G^s\\
\\ +\int_{{\mathbb R}^n}\left( K_{n,i}(.-y)\right) \sum_{j,m=1}^n\left(  v^{(k-1)}_{m,j}v^{(k-1)}_{j,m}\right) (.,y)dy\ast G^s\\
\\
-s \Delta v^{(k-1)}_i\ast G^s,
\end{array}
\end{equation}
where for functions $g\in C([0,T],{\mathbb R^n})$ we denote $$g\ast G^s:=\int_s^t\int_{{\mathbb R}^n}g(\sigma,y)G^s(t,x;\sigma,y)dyd\sigma.$$
For multiindices $\gamma,\beta$ with $\beta_j=\gamma_j+\delta_{lj}$ for $1\leq j,l\leq n$ and $|\beta|=\sum_i\beta_i\geq 1$ and for $k\geq 1$ we define
\begin{equation}\label{Navleraysol2}
\begin{array}{ll}
D^{\beta}_xv^{(k)}_i
=D^{\gamma}h_i\ast_{sp}G^s_{,l}-\sum_{j=1}^n D^{\gamma}_x\left( v^{(k-1)}_j v^{(k-1)}_{i,j}\right) \ast G^s_{,l}\\
\\ +\int_{{\mathbb R}^n}\left( K_{n,i}(.-y)\right) \sum_{j,m=1}^nD^{\gamma}_x\left(  v^{(k-1)}_{m,j}v^{(k-1)}_{j,m}\right) (.,y)dy\ast G^s_{,l}\\
\\
-D^{\gamma}_x\left( s  \Delta v^{(k-1)}_i \ast G^s_{,l}\right) .
\end{array}
\end{equation}
In (\ref{Navleraysol}) derivatives may be shifted according to the usual convolution rule if convenient.
For sufficiently regular data $h_i,~ 1\leq i\leq n$ the function $v^{(k)}_i,~1\leq i\leq n$ in (\ref{Navleraysol}) solves the equation
\begin{equation}\label{Navleraysol3}
\begin{array}{ll}
\frac{\partial v^{(k)}_i}{\partial t}
-s\Delta v^{(k)}_i=-\sum_{j=1}^n  v^{(k-1)}_j v^{(k-1)}_{i,j}\\
\\ +\int_{{\mathbb R}^n}\left( K_{n,i}(.-y)\right) \sum_{j,m=1}^n\left(  v^{(k-1)}_{m,j}v^{(k-1)}_{j,m}\right) (.,y)dy
-s \Delta v^{(k-1)}_i,
\end{array}
\end{equation}
on an interval $[s,T]$ for some $T>s$, such that a regular fixed point limit ${\mathbf v}^{(k)}\uparrow {\mathbf v}^{E,s}$ solves the incompressible Euler equation on the time interval $[s,T]$. The time-shifted function ${\mathbf v}^{E}(.,.):={\mathbf v}^{E,s}(.+s,.)$ then solves the Euler equation on the time interval $[0,T-s]$. Since the transformation $t\rightarrow -t$ combined with an accompanying change $s\rightarrow -s$ does not affect the argument below, it is essential to construct a fixed point for some  data $h_i\in C^{1,\alpha}\setminus C^2,~ 1\leq i\leq n$. The short time fixed point argument for data $h_i\in C^2\cap H^2_{pol,2},~ 1\leq i\leq n$ then follows a fortioriy by similar arguments. We consider an example of data $h_i\in C^{1,\alpha}\setminus C^2\cap H^2_{pol,2},~ 1\leq i\leq n$ with $\sum_{i=1}^nh_{i,i}=0$ in the essential case $n=3$ and define for all $x\in {\mathbb R}^n$ and small $\epsilon >0$
\begin{equation}\label{geps}
g^{(\epsilon)}(x)=  r_2\cos\left(\frac{1}{r_2^{\epsilon}} \right)\frac{1}{\left(1+r_2^2 \right)^6},~ r_2:=\sqrt{x_1^2+x_2^2}.
\end{equation}
Here the upper script $(\epsilon)$ in brackets denotes an index.
\begin{equation}\label{h1h2h3}
h_1=x_1g^{(\epsilon)},~h_2=x_2g^{(\epsilon)},~h_3=-2x_3\left( g^{(\epsilon)}+\frac{1}{2}r_2\frac{d}{dr_2}g^{(\epsilon)}\right) ,
\end{equation} 
and where small epsilon ensures some regularity of the data. Note that  $h_{1,11}$ and $h_{1,22}$ have singular behavior $\sim r^{-2\epsilon}$ at $r$ close to zero, 
Incompressibility $\sum_{i=1}^nh_{i,i}=0$ holds at time $t=0$, and then holds -according to Leray- as long as a solution exists.
We note that $$\omega_3(0,.)=h_{1,2}(.)-h_{2,1}(.)=0$$ such that the two dimensional analogue of this example leads to a unique solution $\omega_3=0$ of equation (\ref{dim2}).

In order to construct a fixed point we prove contraction for the increments
\begin{equation}
\delta v^{(k)}_i:=v^{(k)}_i-v^{(k-1)}_i,~\mbox{where}~k\geq 1,~ 1\leq i\leq n.
\end{equation}
Next we observe that the linear terms in the equation for $v^{(k)}_i,~ 1\leq i\leq n$ in (\ref{Navleraysol3}) add up to
\begin{equation}\label{linterm}
s\Delta \left( v^{(k)}_i-v^{(k-1)}_i\right)=s\Delta \delta v^{(k)}_i
\end{equation}
for $k\geq 1$. We note that the latter expression has a time singulatity for $k=1$ of small order. However this singularity disappears for $k\geq 2$. We observe  
\begin{lem}
For small $s>0$ and  data $h_i,~ 1\leq i\leq n$ as above and for some $T>s$ the linear term of (\ref{linterm}) in
\begin{equation}\label{Navleraysol4}
\begin{array}{ll}
\frac{\partial v^{(k)}_i}{\partial t}
=-\sum_{j=1}^n  v^{(k-1)}_j v^{(k-1)}_{i,j}\\
\\ +\int_{{\mathbb R}^n}\left( K_{n,i}(.-y)\right) \sum_{j,m=1}^n\left(  v^{(k-1)}_{m,j}v^{(k-1)}_{j,m}\right) (.,y)dy
-s \Delta \delta v^{(k)}_i,
\end{array}
\end{equation}
converges to zero as $k\uparrow \infty$ on the interval $[s,T]$.
\end{lem}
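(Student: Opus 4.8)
The plan is to prove that the increments $\delta v^{(k)}_i$ tend to zero in a norm strong enough to control two spatial derivatives with the prescribed polynomial weight; since the linear term in question equals $s\Delta\delta v^{(k)}_i$, this gives the claim at once. First I would derive the recursion for $\delta v^{(k)}_i$ by subtracting the representation (\ref{Navleraysol}) at stage $k-1$ from the one at stage $k$ and writing the bilinear differences in telescoping form, e.g. $v^{(k-1)}_j v^{(k-1)}_{i,j}-v^{(k-2)}_j v^{(k-2)}_{i,j}=\delta v^{(k-1)}_j v^{(k-1)}_{i,j}+v^{(k-2)}_j\delta v^{(k-1)}_{i,j}$, and treating the nonlocal Leray (pressure) term by the same device. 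This exhibits $\delta v^{(k)}_i$ as a sum of three contributions convolved with $G^s$: a convective difference, a nonlocal difference, and the viscosity-compensation difference $-s\Delta\delta v^{(k-1)}_i\ast G^s$. The natural working space is a time-weighted variant of $H^2_{pol,2}\cap C^2$ on $[s,T]$, the weight chosen to absorb the mild time singularity at $t=s$ discussed below.

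Second, I would establish the contraction. For the first derivatives one uses (\ref{Navleraysol}) directly, and for the second derivatives one uses (\ref{Navleraysol2}), shifting a single derivative onto the kernel so that only $G^s_{,l}$ appears, whose time singularity $(t-\sigma)^{-1/2}$ is integrable, while at most one derivative falls on the factors $v^{(k-1)},v^{(k-2)}$ (which are bounded in the working norm by the induction hypothesis). The time integral in $g\ast G^s$ then supplies a positive power of $T-s$, and the Gaussian decay of $G^s$ together with the mapping properties of the nonlocal operator with kernel $K_{n,i}$ on the decay class (\ref{pol1}) preserves the polynomial weight. For the viscosity term I would keep both derivatives on $\delta v^{(k-1)}_i$ and convolve with $G^s$ rather than with $\Delta G^s$, whose singularity would not be integrable; this term therefore carries the explicit prefactor $s$ together with the gain from the time convolution. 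Summation yields $\|\delta v^{(k)}\|\le C(T-s)^{\theta}\|\delta v^{(k-1)}\|$ with $\theta>0$, a genuine contraction once $s$ and $T-s$ are small.

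Third, I would isolate the $k=1$ anomaly. Since $v^{(0)}_i=h_i\ast_{sp}G^s$ with data only in $C^{1,\alpha}\setminus C^2$, the second derivatives of $v^{(0)}_i$ behave like $(t-s)^{(\alpha-1)/2}$ as $t\downarrow s$; with $\alpha=1-2\epsilon$ this is the singular behaviour $\sim r^{-2\epsilon}$ of $h_{1,11},h_{1,22}$ noted after (\ref{h1h2h3}), carried by the heat flow into a time singularity $(t-s)^{-\epsilon}$. This exponent exceeds $-1$, so the singularity is integrable on $[s,T]$ and harmless for the norm, and it does not propagate: for $k\ge 2$ every term of $\delta v^{(k)}_i$ carries at least one further $\ast G^s$ smoothing, consistent with the observation preceding the lemma. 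Starting the contraction at $k=2$ with a finite increment, geometric decay gives $\|\delta v^{(k)}\|\to 0$, and since the norm dominates $\Delta\delta v^{(k)}_i$ uniformly on $[s,T]$ we conclude $s\Delta\delta v^{(k)}_i\to 0$. The hard part is the second-derivative contraction for merely $C^{1,\alpha}$ data: one must extract two derivatives from the scheme using only the integrable kernel singularity of $G^s_{,l}$, never $\Delta G^s$, while simultaneously keeping the compensation term $-s\Delta\delta v^{(k-1)}_i\ast G^s$ inside the contraction -- this is where the smallness of $s$ and $T-s$ and the polynomial weight must be balanced.
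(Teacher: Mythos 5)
Your plan follows the same architecture as the paper up to one decisive point: the same iteration scheme and recursion for the increments obtained by telescoping the bilinear terms, the same device of shifting exactly one derivative onto the kernel so that only $G^s_{,l}$ with its integrable time singularity $(t-\sigma)^{-\delta}$, $\delta\in(0.5,1)$, ever appears (this is the paper's (\ref{Navleraysol2}) and (\ref{derest})), the same isolation of the $k=1$ anomaly from the $C^{1,\alpha}\setminus C^2$ data as an integrable $(t-s)^{-\epsilon}$ singularity that does not propagate to $k\geq 2$, and the same conclusion by geometric decay. (The paper splits the work in two: the proof of the present lemma only reduces the claim to $\lim_{k}D^{\beta}_x\delta N^{(k)}\ast G^s=0$ for $0\leq |\beta|\leq 2$ and defers that to the subsequent contraction lemma; you compress both steps into a single contraction, which is legitimate in principle.)

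The genuine gap is your treatment of the viscosity-compensation term. You propose to keep $-s\Delta\delta v^{(k-1)}_i\ast G^s$ inside the contraction, ``keeping both derivatives on $\delta v^{(k-1)}_i$ and convolving with $G^s$''. That disposes of the increment itself and its first derivatives, but the contraction must close at the level of two spatial derivatives, and there $D^{\beta}_x\left( s\Delta \delta v^{(k-1)}_i\ast G^s\right)$ with $|\beta|=2$ is not bounded by your working norm: keeping $\beta$ on the function requires four derivatives of $\delta v^{(k-1)}_i$, distributing one derivative each way requires three, and shifting both onto the kernel produces $G^s_{,lm}$, whose $(t-\sigma)^{-1}$ time singularity is not integrable (the classical cancellation trick would require a H\"older modulus for $\Delta\delta v^{(k-1)}_i$, which a $C^2\cap H^2_{pol,2}$-type norm does not provide). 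Neither the prefactor $s$ nor your time weight repairs this, because the obstruction is the unboundedness of $f\mapsto \Delta f\ast G^s$ on $C^2$-type spaces, not the size of a constant. This is exactly where the paper deviates from your route: it defines the reduced increment $\delta v^{*,(k)}_i=\delta v^{(k)}_i+s\Delta v^{(k)}_i\ast G^s$, whose recursion (\ref{derest}) contains only the telescoped bilinear terms convolved with $G^s_{,l}$, so the $s\Delta$-term never has to be contracted directly; contraction for $\delta v^{*,(k)}_i$ as in (\ref{derest2})--(\ref{derest3}) then gives $\delta N^{(k)}\ast G^s\to 0$, and the vanishing of $s\Delta\delta v^{(k)}_i$ follows from the representation $\delta v^{(k)}_i=\delta N^{(k-1)}_i\ast G^s-s\Delta\delta v^{(k-1)}_i\ast G^s$. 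To repair your argument you would either have to adopt such a reduced increment or work in a norm with a genuine H\"older gain at the second-derivative level; as written, the step you yourself flag as ``the hard part'' does not go through.
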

\begin{proof} Note that equation (\ref{Navleraysol4}) is (\ref{Navleraysol3}) rewritten. Define
\begin{equation}
N^{(k)}_i=-\sum_{j=1}^n  v^{(k-1)}_j v^{(k-1)}_{i,j}\\
\\ +\int_{{\mathbb R}^n}\left( K_{n,i}(.-y)\right) \sum_{j,m=1}^n\left(  v^{(k-1)}_{m,j}v^{(k-1)}_{j,m}\right) (.,y)dy
\end{equation}
We have
\begin{equation}
\delta v^{(1)}_i=N^{(0)}_i\ast G^s+s\Delta v_i^{(0)}\ast G^s=N^{(0)}_i\ast G^s+s\Delta (h_i\ast_{sp}G^s)\ast G^s
\end{equation}
For some $T>s>0$ we have $s\Delta (h_i\ast_{sp}G^s)\ast G^s=s\sum_j(h_{i,j}G^s_{,j})\ast G^s_{,j}\in C_b^{0,2}((s,T],{\mathbb R}^n)$ (space of twice continuously differentiable functions with bounded spatial derivatives up to second order), where standard estimates of the Gaussian and upper bounds of elliptic integrals show that there is an upper bound  $\sim(t-s)^{-\epsilon}$ for small $\epsilon >0$ for $t>s$ close to $s$. For $k=2$ we have
\begin{equation}
\begin{array}{ll}
\delta v^{(2)}_i=(N^{(1)}_i-N^{(0)}_i)\ast G^s+s\Delta v_i^{(1)}\ast G^s\\
\\
=(N^{(1)}_i-N^{(0)}_i)\ast G^s\\
\\
+s\Delta \left(h_i\ast G^s+ N^{(0)}_i\ast G^s+s\Delta (h_i\ast_{sp}G^s)\ast G^s\right) \ast G^s
\end{array}
\end{equation}
Here, standard Gaussian upper bound estimates show that $\delta v^{(2)}_i\in C_b^{0,2}([s,T],{\mathbb R}^n)$ indeed. Inductively, we have
\begin{equation}
\delta v^{(k)}_i=(N^{(k-1)}_i-N^{(k-2)}_i)\ast G^s+s\Delta v_i^{(k-1)}\ast G^s
\end{equation}
and induction shows that $\lim_{k\uparrow\infty} \delta v^{(k)}_i=0$ if $\lim_{k\uparrow\infty}D^{\beta}_x(N^{(k-1)}_i-N^{(k-2)}_i)\ast G^s=0$ for $0\leq |\beta|\leq 2$. This is shown in the proof of the next lemma.
\end{proof}
For the preceding argument it is essential that $\delta N^{(k)}=N^{(k)}-N^{(k-1)}$ converges to zero as $k\uparrow \infty$. Having observed this it suffices to show that
contraction holds for the reduced increment which is defined by
\begin{equation}
\delta v^{*,(k)}_i=\delta v^{(k)}_i+s\Delta v^{(k)}_i\ast G^s,~ 1\leq i\leq n.
\end{equation}
A word concerning function spaces $C^2\cap H^2_{pol,2}$ is not a closed space, but it has interesting embeddings into closed subspaces: the strong polynomial decay implies that a transformation $x_i\rightarrow \arctan(x_i)$ leads to an embedding of transformed  subspaces of a closed space $C^2_b(\Omega)$ of twice continuously differentiable functions with bounded spatial derivatives up to order $2$ and zero data on the boundary of the square $\Omega=\left( -\frac{\pi}{2},\frac{\pi}{2}\right) ^n$. For our purposes it is sufficient to define
\begin{equation}
{\big |}f{\big |}_{C^2\cap H^2_{pol,2}}:=1_{f\in C^2\cap H^2_{pol,2}}\sum_{0\leq |\beta|\leq 2}\sup_{y\in {\mathbb R}^n}{\big |}D^{\beta}_xf(y){\big |},
\end{equation}
where $1_{f\in C^2\cap H^2_{pol,2}}$ is the characteristic function of $C^2\cap H^2_{pol,2}$.
We have
\begin{lem}
For $h_i,~ 1\leq i\leq n$ as in (\ref{h1h2h3}) above with $\epsilon >0$ small enough
the sequence of nonlinear increments $\left( \delta N^{(k)}\ast G^s\right)_{k\geq 2}$ converges to zero in the function space $C^2\cap H^2_{pol,2}$ on the time interval $[s,T]$.
Furthermore, there exists $T>0$ such that for all $0<s<T$ and all $k\geq 0$
\begin{equation}\label{assertcon}
\sup_{t\in [s,T]}{\big |}\delta v^{*,(k+1)}_i(t,.){\big |}_{C^2\cap H^2_{pol,2}}\leq \frac{1}{2}\sup_{t\in [s,T]}{\big |}\delta v^{*,(k)}_i(t,.){\big |}_{C^2\cap H^2_{pol,2}}.
\end{equation}
\end{lem}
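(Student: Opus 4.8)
The plan is to derive both assertions from a single short-time contraction for the \emph{reduced} increment $\delta v^{*,(k)}_i$ and then obtain the convergence $\delta N^{(k)}\ast G^s\to 0$ as a by-product. The starting point is the bilinear-difference identity: because $N^{(k)}_i$ is quadratic in $v^{(k-1)}$, writing $ab-cd=a(b-d)+(a-c)d$ for each product $v^{(k-1)}_jv^{(k-1)}_{i,j}$ and for the quadratic source $\sum_{j,m}v^{(k-1)}_{m,j}v^{(k-1)}_{j,m}$ of the nonlocal term shows that
$$\delta N^{(k)}_i=N^{(k)}_i-N^{(k-1)}_i$$
is \emph{linear} in the lower increment $\delta v^{(k-1)}=v^{(k-1)}-v^{(k-2)}$, with coefficients built from the iterates $v^{(k-1)},v^{(k-2)}$ and their first derivatives. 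Granting the uniform a~priori ball bound for the iterates in $C^2\cap H^2_{pol,2}$ on $[s,T]$ that the short-time scheme provides, this yields $|\delta N^{(k)}_i|\lesssim C\,|\delta v^{(k-1)}|_{C^2\cap H^2_{pol,2}}$ for the local transport part, and an analogous bound for the Leray part treated below. Once the contraction (\ref{assertcon}) is established, the reduced increments decay geometrically, and since $\delta N^{(k)}\ast G^s$ is controlled by the same bilinear-difference bound in terms of $\delta v^{(k-1)}$, its convergence to zero in $C^2\cap H^2_{pol,2}$ follows by the identical geometric estimate and summation.

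The engine of the contraction is Gaussian smoothing for $G^s$ in (\ref{Gsnu}). For a multiindex $\beta$ with $|\beta|\leq 2$ one distributes the derivatives between source and kernel using $\|D^\beta_yG^s(t,\cdot;\sigma,\cdot)\|_{L^1}\lesssim (s(t-\sigma))^{-|\beta|/2}$; the ensuing time integral $\int_s^t(s(t-\sigma))^{-\ell/2}\,d\sigma\lesssim s^{-\ell/2}(t-s)^{1-\ell/2}$ converges for $\ell<2$ and supplies a \emph{positive} power of the window length $t-s\leq T-s$. Placing the derivatives on the source (which is smooth for $t>s$), i.e.\ $\ell=0$, gives the clean factor $(t-s)$ with no negative power of $s$, and this is the mechanism producing the constant $\tfrac12$ once $T-s$ is small. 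Feeding the bilinear-difference bound through this estimate turns $|\delta v^{(k-1)}|_{C^2\cap H^2_{pol,2}}$ into $|\delta v^{(k)}|_{C^2\cap H^2_{pol,2}}$ with a small prefactor, and the definition $\delta v^{*,(k)}_i=\delta v^{(k)}_i+s\Delta v^{(k)}_i\ast G^s$ is chosen precisely so that the linear viscosity contribution (\ref{linterm}) is absorbed and the recursion for the reduced increments closes.

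The more delicate input is the nonlocal Leray term $\int_{{\mathbb R}^n}K_{n,i}(\cdot-y)\sum_{j,m}(v_{m,j}v_{j,m})(\cdot,y)\,dy$, i.e.\ $\partial_iK_n$ convolved with the quadratic source $Q$. In $n=3$ the kernel $\partial_iK_n\sim|x|^{-2}$ is locally integrable, so the zeroth- and first-order parts of its $C^2$-contribution are estimated by moving at most one derivative onto $Q$ (whence the need for the iterates in $C^2$) against the weakly singular kernel, the polynomial decay of the data and of $Q$ guaranteeing convergence of the spatial integrals at infinity. The genuinely second-order part, however, forces two derivatives onto $\partial_iK_n$, producing a Calder\'on--Zygmund kernel $\sim|x|^{-(n+1)}$ that is \emph{not} bounded on $C^0$; this is exactly why the working norm couples $C^2$ with $H^2_{pol,2}$, since such singular integrals are bounded on the $L^2$-Sobolev factor $H^2$. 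Thus the $H^2_{pol,2}$ component absorbs the top-order nonlocal estimate while the $C^2$ component and the polynomial weight handle the transport term and the decay at infinity, and the two are estimated in tandem.

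Finally one must accommodate the low regularity of the data. Since $h_i\in C^{1,\alpha}\setminus C^2$ in (\ref{h1h2h3}) with $h_{1,11},h_{1,22}\sim r_2^{-2\epsilon}$ near $r_2=0$, the spatial smoothing in $v^{(0)}_i=h_i\ast_{sp}G^s$ makes its second and third derivatives blow up like $(s(t-s))^{-\mu}$ as $t\downarrow s$, which is the origin of the $(t-s)^{-\epsilon}$ time-singularity of $\delta v^{(1)}_i$ recorded after (\ref{linterm}); for small $\epsilon$ this singularity is time-integrable, so it contributes a finite, small amount after convolution and is progressively removed in the iterates $k\geq 2$. The main obstacle I expect is making the contraction constant $\tfrac12$ hold \emph{uniformly} in $s\in(0,T)$ once $T$ is fixed: the interval $[s,T]$ does not shrink as $s\downarrow s$, while the data irregularity persists through the $h_i\ast_{sp}G^s$ term in every iterate and forces, near $t=s$, derivative-on-kernel placements carrying the negative powers $s^{-\ell/2}$ with $\ell\geq 1$. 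The estimate must therefore be organized with a time weight that pays each such $s$-power by a compensating gain in the (then short) window near $t=s$, or routes the top order through the $H^2$ singular-integral bound; this bookkeeping, together with the simultaneous polynomial-decay control of the nonlocal term, is where the real work of the proof lies.
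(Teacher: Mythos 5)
Your overall architecture coincides with the paper's: your bilinear difference $ab-cd=a(b-d)+(a-c)d$ is exactly the decomposition the paper writes in (\ref{derest}) (including the factor $2$ from the symmetry of the Leray term); your ``$L^1$ bound on the smoothed kernel yielding a positive power of the time window, then convolution'' is the paper's estimate ${\big |}G^s_{,l}{\big |}_{L^1}\leq CT^{1-\delta}$ followed by Young's inequality in (\ref{derest2}); your ``uniform a priori ball bound for the iterates'' is the paper's inductive hypothesis (\ref{cl}), closed simultaneously with the contraction by the explicit choice of $T$; and your initial-layer discussion of the $C^{1,\alpha}$ data, the $(t-s)^{-\epsilon}$ singularity of the first increment, and its disappearance for $k\geq 2$ matches the paper's analysis of $v^{(0)}_i$, $v^{(1)}_i$ and of $N^{(k)}_i\in C^{0,2}$ for $k\geq 2$. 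Deriving $\delta N^{(k)}\ast G^s\to 0$ as a by-product of the geometric decay of the reduced increments is also exactly what the paper does.

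There is, however, a genuine gap at the crux. First, your preferred derivative placement $\ell=0$ cannot close the top-order estimate: for $|\beta|=2$, putting both derivatives on the source $\delta v^{(k-1)}_j v^{(k-1)}_{i,j}$ produces \emph{third} derivatives of the iterates, which the working norm ${\big |}\cdot{\big |}_{C^2\cap H^2_{pol,2}}$ does not control and which genuinely blow up near $t=s$ because $h_i\in C^{1,\alpha}\setminus C^2$. The paper's (\ref{derest}) is built precisely to avoid this: it fixes $\ell=1$, i.e.\ exactly one derivative on $G^s_{,l}$ and $|\gamma|\leq 1$ on the source, so only derivatives of the iterates up to order two appear. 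This also dissolves your Calder\'on--Zygmund detour: the paper never differentiates $K_{n,i}$ at all; the Leray term is handled by local integrability of $K_{n,i}\sim |x|^{1-n}$ together with the polynomial decay (the constant $C_K$ in (\ref{derest3})), and note that a CZ bound on the $H^2$ factor would in any case not control the paper's ``norm,'' which is a sup-norm over derivatives up to order two (membership in $H^2_{pol,2}$ enters only through an indicator function). Second, and decisively, you explicitly defer the uniform-in-$s$ contraction constant --- that is, inequality (\ref{assertcon}), the very assertion of the lemma --- as ``where the real work of the proof lies''; the proposal is therefore a plan rather than a proof. The paper does supply this step (Young's inequality, the explicit $T$, contraction factor $\frac14$ closing the induction (\ref{cl})), though your instinct about the $s$-powers is sound: since $\nu=s$ in (\ref{Gsnu}), the sharp pointwise bound on $G^s_{,l}$ carries $(s(\sigma-s))^{-\delta}$ rather than $(\sigma-s)^{-\delta}$, and the paper's constant $c$ silently absorbs the factor $s^{-\delta}$, so the claimed independence of $T$ from $s$ is not actually justified there either --- but as a review of your attempt, the missing step is that you never establish the central inequality you were asked to prove.
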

\begin{proof}
We have $v^{(0)}_i=h_i\ast_{sp}G^s$ with $h_i, 1\leq i\leq n$ as in (\ref{h1h2h3}) above, and
observe that
\begin{equation}
v^{(0)}_{i,jk}=h_{i,j}\ast_{sp}G^s_{,k},
\end{equation}
where $h_{i,j}\in C^{\alpha}$ with $\alpha$ close to one if $\epsilon >0$ is small,  where an upper bound can be obtained via
\begin{equation}
|(h_i(x)-h_i(y))G^s_{,k}(t,x;s,y)|\leq \frac{C}{(t-s)^{\delta}|x-y|^{n+1-2\delta-\alpha}}
\end{equation}
with a finite constant $C>0$ (the subtracted term is added seperately and treated by a surface integral obtained by the divergence theorem). Due to large $\alpha$ the latter term is integrable even for small $\delta >\frac{1-\alpha}{2}$
 Consider first the Burgers term in $N^{(0)}_i$. It is of the form
\begin{equation}
B^{(0)}_i\ast G^s:=\left( \sum_{j=1}^nv^{(0)}_jv^{(0)}_{i,j}\right)\ast G^s=\left( \sum_{j=1}^n(h_j\ast_{sp}G^s)(h_i\ast_{sp}G^s_{,j})\right)\ast G^s.
\end{equation}
Second order spatial derivatives of $B_{i,k}\ast G^s_{,l}$ can be estimated by terms of the form
\begin{equation}\label{burg1}
{\Bigg |}\left( \sum_{j=1}^n(h_j\ast_{sp}G^s)(h_{i,k}\ast_{sp}G^s_{,j})\right)\ast G^s_{,l}{\Bigg |}
\end{equation}
and similar terms with even weaker singulatities of the factors. Time singularities of upper bounds of  ${|}G^s_{,l}{|}$ are of the form $C_1t^{-\delta_0}$ where $\delta_0\in (0.5,1)$ can be chosen such that expressions as (\ref{burg1}) are integrable with respect to time (usual elliptic integral estimates).
Similarly we observe that the Leray projection term is twiche diffentiable with respect to the spatial variables and continuous with respect to the time variable on the time interval $[s,t]$, i.e.,
\begin{equation}
L^{(0)}_i\ast G^s:=K_{,i}\ast_{sp}\left( \sum_{j,l=1}^n(h_j\ast_{sp}G^s_{,l})(h_l\ast_{sp}G^s_{,j})\right)\ast G^s\in C^{0,2}.
\end{equation}
The only singular terms at time $t=s$ which enters the construction of $v^{(1)}_i,1 \leq i\leq n$ are the second order spatial derivatives of the linear term $s\Delta v^{(0)}_\ast G^s=s(\sum_{i=1}^nh_{i,i}\ast_{sp}G^s_{,i})\ast G^s$. Since $h_{i,i}\in C^{\alpha}$ for all $1\leq i\leq n$ we know that $h_{i,i}\ast G^s_{,i}$ is spatially differentiable, hence spatially Lipschitz. This Lipschitz continuity implies  that $s(\sum_{i=1}^nh_{i,i}\ast_{sp}G^s_{,i})\ast G^s_{,jk}$ is spatially integrable where the related elliptic convolution intergal with respect to time of the form $\int_{s}^t\frac{C_2}{\sigma^{\delta_0}(t-\sigma)^{\delta_1}}$ for some finite constant $C_2>0$ and $\delta_0,\delta_1>0.5$,nut close to $0.5$ of $\alpha$ is close to $1$. Hence, $s\Delta v^{(0)}_i\ast G^s_{,jk}$ has a weakly singular upper bound of form $C_3t^{1-\delta_1-\delta_2}$ for some finite constant $C_3>0$.
 These terms also enter into the construction of $v^{(2)}_i,~ 1\leq i\leq n$. However, similar analysis of $B^{(1)}_i\ast G^s$ and $L^{(1)}_i\ast G^s$ show that these terms are indeed in $C^{0,2}$ on $[s,t]\times {\mathbb R}^n$ and so is $N^{(2)}_i$ and inductively $N^{(k)}_i$ for $k\geq 2$ and $1\leq i\leq n$. Note that the multiplicative form of the nonlinear terms (which is only slightly weakenend by convolutions 
with the Gaussians or spatial derivatives up to second order of the Gaussians ensures that $N^{(k)}_i\in H^2_{pol,2}$ for $1\leq i\leq n$ and $k\geq 0$. The convergence of $(\delta N^{(k)}_i)_{k\geq 2}$ to zero in $C^{0,2}$ then follows from contraction of the reduced increments $\delta v^{*,(k)}_i$, which we show next. 
For given $s>0$ and $T$ small enough (to be determined) we consider the iteration scheme for given $t\in (s,T]$ and construct an upper bound for 
$\max_{1\leq i\leq n}{\big |} D^{\beta}_x\delta v^{(k)}_i(t,.){\big |}$ and for $|\beta|\geq 1$ and 
$k\geq 2$, where we extract the functional increment $\max_{1\leq i\leq n}{\big |} D^{\beta}_x \delta v^{(k-1)}_i(t,.){\big |}$. For $|\beta|\geq 1$ we have

\begin{equation}\label{derest}
\begin{array}{ll}
D^{\beta}_x\delta v^{*,(k)}_i
=-\sum_{j=1}^n D^{\gamma}_x\left(\delta  v^{*,(k-1)}_j v^{*,(k-1)}_{i,j}\right) \ast G^s_{,l}\\
\\
-\sum_{j=1}^n D^{\gamma}_x\left( v^{*,(k-1)}_j \delta v^{*,(k-1)}_{i,j}\right) \ast G^s_{,l}\\
\\ 
+2\int_{{\mathbb R}^n}\left( K_{n,i}(.-y)\right) \sum_{j,m=1}^nD^{\gamma}_x\left(  \delta  v^{*,(k-1)}_{m,j}v^{*,(k-1)}_{j,m}\right) (.,y)dy\ast G^s_{,l},
\end{array}
\end{equation}
where the factor $2$ is by the symmetry of the Leray projection term. The first order derivatives of fundamental solutions $G^*_{,l}$ have an upper bound
\begin{equation}
\begin{array}{ll}
{\big |}G^s_{,l}(x,t;y,s){\big |}_{L_1([s,T]\times {\mathbb R}^n)}\leq \int_s^t\int_{B_1(x)}\frac{c}{|\sigma-s|^{\delta}|z-y|^{n+1-2\delta}}dzd\sigma\\
\\
\hspace{2.8cm}+\int_s^t\int_{{\mathbb R}^n\setminus B_1(x)}{\big |}G^s_{,l}(z,\sigma;y,s){\big |}dzd\sigma\leq CT^{1-\delta}
\end{array}
\end{equation}
for $\delta \in (0.5,1)$ and some finite constants $c,C>0$, and where $B_1(x)$ is the $n$-dimensional ball of radius $1$ around $x$. Note that the second integral becomes small as $T>t$ becomes small. Hence the integrals in (\ref{derest}) abbreviated by the symbol $\ast$ can be estimated by  normal convolutions invoking Young's inequality such that
\begin{equation}\label{derest2}
\begin{array}{ll}
{\big |}D^{\beta}_x\delta v^{*,(k)}_i{\big |}_{L^2_C\cap H^0_{pol,2}}
\leq CT^{1-\delta}{\Big (}\sum_{j=1}^n{\big |} D^{\gamma}_x\left(\delta  v^{*,(k-1)}_j v^{*,(k-1)}_{i,j}\right){\big |}_{L^2_C\cap H^0_{pol,2}} \\
\\
+\sum_{j=1}^n {\big |}D^{\gamma}_x\left( v^{*,(k-1)}_j \delta v^{*,(k-1)}_{i,j}\right){\big |}_{L^2_C\cap H^0_{pol,2}}\\
\\ 
+2\int_{{\mathbb R}^n}{\big |}\left( K_{n,i}(.-y)\right) \sum_{j,m=1}^nD^{\gamma}_x\left(  \delta  v^{*,(k-1)}_{m,j}v^{*,(k-1)}_{j,m}\right) (.,y)dy{\big |}_{L^2_C\cap H^0_{pol,2}}{\Big )},
\end{array}
\end{equation}
where $L^2_C=L^2\cap C^0$ is the space of continuous functions $C^0$ which are in $L^2$ and where $T>0$ is small enough such that the  last linear term according to our observations above (absorbing the factor $t$ and one first order spatial derivative into the constant $C$). Assume inductively that for $k\geq 1$
\begin{equation}\label{cl}
\sum_{0\leq |\delta|\leq 2}\sup_{\sigma\in [s,T]}
{\big |}D^{\delta}v^{*,(k-1)}_i(\sigma,.){\big |}_{L^{\infty}_C\cap H^0_{pol,2-\epsilon}}\leq C_0+\sum_{l=1}^{k-1}\frac{1}{4^l},
\end{equation}
where the latter sum is considered to be zero for $k=1$, and where $L^{\infty}_C=C^0\cap L^{\infty}$ is the space of bounded continuous functions. Then we have
\begin{equation}\label{derest3}
\begin{array}{ll}
{\big |}D^{\beta}_x\delta v^{*,(k)}_i{\big |}_{L^2\cap H^0_{pol,2-\epsilon}}
\leq 2CT(C_0+1)(n^2+n)(C_K+1) \times\\
\\
\times{\Bigg (}\max_{1\leq j\leq n}{\big |} D^{\gamma}_x\left(\delta  v^{*,(k-1)}_j \right){\big |}_{L^2\cap H^0_{pol,2-\epsilon}} +\max_{1\leq i\leq n} {\big |}D^{\beta}_x\delta v^{*,(k-1)}_{i}{\big |}_{L^2\cap H^0_{pol,2-\epsilon}}\\
\\ 
+2\max_{1\leq m\leq n}{\big |} D^{\beta}_x \delta  v^{*,(k-1)}_{m}(.){\big |}_{L^2\cap H^0_{pol,2-\epsilon}}{\Bigg )}.
\end{array}
\end{equation}
There are $4$ terms on the right side of the latter inequality, where one term has the factor 2. Analogous estimates hold for first order spatial derivatives and the value function itself. Furthermore there are $n^2+n+1$ multiindices $\beta$ with $0\leq |\beta|\leq 2$. Noting that $T^{1-\delta}>T$ for $T,\delta\in (0,1)$ we choose
\begin{equation}
T=\frac{1}{40(n^2+n+1)CT(C_0+1)(n^2+n)(C_K+1) }
\end{equation}
in order to have a contraction constant smaller or equal to $\frac{1}{4}$ in (\ref{assertcon}). This choice apperantly closes also the induction step for the upper bound in (\ref{cl}).
The argument for $|\beta|=0$ is similar and the proof is finished.
\end{proof}

\section{Conclusion}
Uniqueness does not hold for the general Navier Stokes model (GNSM) described in (\ref{af}) or (\ref{rf}), even in strong function spaces. Singularities may even arise in reduced forms of GNSM (e.g. for time-independent forces) if techniques in \cite{T} can be applied. For turbulence modeling the generality of GNSM may be a minimal request, because boundary conditions can be subsumed by external forces essentially, and in this context $L^2$-forces are suitable. As a consequence we may have either not well-defined or indeterministic models which encode turbulence, or unsuitable models which allow us to prove global regular existence and uniqueness.  A partial remedy may be that a proof of global regular solution branches may still be possible for GNSM in stronger function spaces, where damping estimates hold (and even damping estimates by homotopy as in \cite{O} are not excluded), and where the mistake outlined in \cite{web} is avoided. A proof of singularities for the simple model in \cite{F} would even sharpen this dilemma. However, it seems that this does not hold. Incompressibility implies that the solution increment $\delta v_i=v_i-h_i\ast_{sp}G_{\nu}$ can be written in terms of convolutions with the first order spatial derivatives of the Gaussian, i.e., on some time interval $[t_0,t_0+\Delta],~t_0\geq 0,\Delta >0$ 
\begin{equation}\label{Navleraysol2}
\begin{array}{ll}
\delta v_i
=\sum_{j=1}^n\left( v_j v_i\right) \ast G_{\nu,j}\\
\\
+\sum_{j,m=1}^n\int_{{\mathbb R}^n}\left( K_n(.-y)\right) \sum_{j,m=1}^n\left( v_{m,j}v_{j,m}\right) (.,y)dy\ast G_{\nu,i},~ 1\leq i\leq n.
\end{array}
\end{equation}
where $G_{\nu}$ is the fundamental solution of $\frac{\partial}{\partial t}p-\nu \Delta p=0$.
This increment is smaller than a corresponding increment  in the general Navier Stokes model (GNSM) in(\ref{af}) or (\ref{rf}) due to the antisymmetry $y_i\rightarrow -y_i$ of $G_{\nu,i}$ if the the Lipschitz continuity of the convoluted Burgers term and Leray projection term in the convolution in (\ref{Navleraysol2}) is ensured. In strong space the increment
upper bound
\begin{equation}\label{parafreeest}
 {\Big |} L\int_{0}^{\Delta}   \frac{1}{4} \frac{1}{\sqrt{4\pi}^n}\frac{1}{\sigma^{2.5}}\exp\left(-\frac{1}{4\sigma} \right)d\sigma {\Big |}\in o(\Delta),
\end{equation}
indicates a linear time upper bound, and it seems that it can be offset by autocontrol and damping estimates. For a suitable turbulence model such as  GNSM with the abstract scheme in (\ref{af}) or (\ref{rf}) the corrresponding increments are of order $O(\Delta)$ (big O). If a partial remedy of this dilemma is possible, namely that this growth can be offset by damping effects, is an open question.

\end{document}